\DeclarePairedDelimiter{\brac}{\{}{\}}
\newtheorem{theorem}{Theorem}[section]
\newtheorem{corollary}[theorem]{Corollary}
\newtheorem{lemma}[theorem]{Lemma}
\newtheorem{proposition}[theorem]{Proposition}
\newtheorem{definition}[theorem]{Definition}
\renewenvironment{proof}{{\textit{Proof.}}}{\qed}
\title{On the Assignment Graphs of Oriented Graphs}
\author{Jared Glassband, Garrison Koch,\\ Sophia Lebiere, Xufei Liu, and Evan Sabini}
\date{\today}
\begin{document}

\maketitle

\begin{abstract}	
In this paper, we extend the ideas of graph pebbling to oriented graphs and find a classification for all graphs with fully traversable pebbling assignments that are isomorphic to their assignment graph. We then give some cases in which a graph with a non-fully traversable pebbling assignment is isomorphic to its assignment graph.
\end{abstract}

\section{Introduction}

In this paper we require that $G$ be a finite, simple, oriented graph. An oriented graph being a directed graph in which no edge is bidirectional. To distinguish from the degree of a vertex in a standard graph let the \textbf{valence} of a vertex in an oriented graph be the number of edges that start at that vertex. Now we may again consider $(S_G)$ a \textit{pebbling assignment} on a graph $G$ some distribution of pebbles on the vertices of $G$. In the oriented case, a \textit{pebbling move} consists of removing two pebbles from vertex $v$ and simultaneously adding one pebble to a vertex $w$ where $(v,w)$ is an oriented edge of $G$. So we are restricting the possible pebbling moves of the standard graph pebbling by requiring that all pebbling moves must obey the orientation of the graph. We again consider the \textit{assignment graph} $[S_G]$ which is naturally directed as it is a Hasse diagram of the pebbling assignments on a graph after successive pebbling moves. We will be considering $G$ with assignment $(S_G)$ such that $G\cong [S_G]$ as directed graphs. Except for section 8, all graph isomorphism are isomorphisms of directed graphs. It follows from the standard assignment graphs that $[S_G]$ is a  bipartite, rooted graph. As before, a pebbling assignment $(S_G)$ is \textbf{fully traversable} if every edge of $G$ is traversed at some stage of the pebbling process. We will specify that a fully traversable graph must have an edge. By the paper by M. Lind et al., we have the following proposition.

\begin{proposition}
If $(S_G)$ is fully traversable and $G\cong [S_G]$ then every edge is traversed exactly once.
\end{proposition}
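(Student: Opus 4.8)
The plan is to pit the number of edges of $[S_G]$ against the number of edges of $G$. Since $G\cong[S_G]$ as directed graphs, $|E(G)|=|E([S_G])|$, and both are finite (the pebble count strictly decreases along every pebbling move, so only finitely many assignments are reachable from $(S_G)$). It therefore suffices to exhibit a surjection $\pi\colon E([S_G])\to E(G)$ whose fibre over an oriented edge $e$ records the stages at which $e$ is traversed, and then to invoke the pigeonhole principle.

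To build $\pi$: by definition each directed edge of $[S_G]$ records one pebbling move, joining a reachable assignment $A$ to the assignment $A'$ obtained from $A$ by removing two pebbles from some vertex $v$ and placing one on a vertex $w$, a move legal precisely because $(v,w)$ is an oriented edge of $G$; set $\pi(A\to A')=(v,w)$. This is well defined on all of $E([S_G])$: since $G$ has no loops, the move along $(v,w)$ is the unique pebbling move out of $A$ that decreases the pebble count at $v$ by two and increases it at $w$ by one, so distinct oriented edges applied to the same $A$ produce distinct target assignments, and no edge of the Hasse diagram $[S_G]$ is ambiguous. For fixed $e=(v,w)$, sending $A$ to the edge $A\to A'$ gives a bijection between the set of reachable assignments carrying at least two pebbles on $v$ and the fibre $\pi^{-1}(e)$; hence $|\pi^{-1}(e)|$ is exactly the number of stages of the pebbling process at which $e$ is traversed.

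With this dictionary, $(S_G)$ is fully traversable $\iff$ every edge $e\in E(G)$ is traversed at some stage $\iff$ every $e$ lies in $\operatorname{im}(\pi)$ $\iff$ $\pi$ is surjective. A surjection between finite sets of equal size is a bijection, so each fibre $\pi^{-1}(e)$ is a singleton; that is, every edge of $G$ is traversed at exactly one stage. (The stipulation that a fully traversable graph has an edge guarantees $E(G)\neq\varnothing$, so the statement is not vacuous.)

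I do not expect a genuine obstacle here: once $\pi$ is in place, the proof is essentially a one-line counting argument. The only points that need care are bookkeeping ones — confirming that $\pi$ is total on $E([S_G])$ (no parallel edges sneak in, which is exactly where ``oriented'' and ``simple'' are used), and confirming that the informal phrases ``stage of the pebbling process'' and ``edge is traversed'' correspond precisely to ``vertex of $[S_G]$'' and ``edge of $[S_G]$,'' which is what legitimizes the equivalence ``fully traversable $\iff$ $\pi$ surjective.''
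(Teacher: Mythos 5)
Your proof is correct. The paper itself states this proposition without proof (it is imported from the cited work of M.~Lind et al.), but your edge-counting argument --- $|E(G)|=|E([S_G])|$ by the isomorphism, full traversability makes the natural map $\pi\colon E([S_G])\to E(G)$ surjective, and a surjection between finite sets of equal size is a bijection --- is precisely the argument the paper implicitly relies on, being the mirror image of its own proof of Corollary~1.2.
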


In the standard case, a \textit{movable} vertex was any connected vertex that had at least two pebbles on it. However, in the oriented case, we require that a movable vertex not only be connected with at least two pebbles on it but have non-zero valence and thus a possible pebbling move. Furthermore an $n$-movable vertex is a movable vertex with valence of at least $n$.

\begin{corollary}
Let $G$ be a graph with more than one vertex. If $(S_G)$ is fully traversable and $G\cong [S_G]$ then no vertex has more than three pebbles on it.
\end{corollary}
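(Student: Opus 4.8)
The plan is to argue by contradiction, with Proposition~1.1 doing the heavy lifting: since $(S_G)$ is fully traversable and $G\cong[S_G]$, every arc of $G$ is traversed exactly once over the course of the pebbling process. So suppose that at some stage some vertex $v$ carries at least four pebbles. I would first handle the case in which $v$ has positive valence. Pick an arc $(v,w)$ starting at $v$ and make the pebbling move along it; this removes only two pebbles from $v$, so $v$ still carries at least two pebbles, and the move along $(v,w)$ is available a second time. Carrying it out again traverses the arc $(v,w)$ twice, contradicting Proposition~1.1. Thus any vertex that ever holds four or more pebbles must be a sink of $G$.

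It remains to rule out a sink of $G$ ever holding four or more pebbles, and this is where the hypothesis that $G$ has more than one vertex enters. If the root $r$ --- the vertex of $G$ matched with $S_G$ under the isomorphism --- had valence zero, then no pebbling move would be possible from $S_G$, so $[S_G]$ would be a single vertex and $|V(G)|=1$; hence $r$ has positive valence and, in particular, $v\neq r$. Since $[S_G]$ (and so $G$) is connected, $v$ then has at least one incoming arc. The idea now is to exploit full traversability together with the graded, bipartite structure of $[S_G]$: a sink's pebble count never decreases, every incoming arc of $v$ is traversed exactly once and each such traversal deposits one more pebble on $v$, and the assignments are layered by total pebble count. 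Tracking the assignments that arise as $v$'s pebble count climbs, the goal is to show that a sink carrying four or more pebbles forces $[S_G]$ to contain strictly more distinct assignments than $G$ has vertices, contradicting $G\cong[S_G]$.

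The main obstacle is precisely this sink case. For a vertex of positive valence the contradiction is entirely local --- one simply repeats a move --- but for a sink that device is unavailable, and the contradiction must instead be squeezed out of global data: the vertex count of $[S_G]$, its grading by total pebble number, and the exact-once traversal bookkeeping of Proposition~1.1. I expect the cleanest route to go through a careful accounting of the distinct assignments produced while the incoming arcs of $v$ are traversed, or through a structural property of assignment graphs established elsewhere in the paper.
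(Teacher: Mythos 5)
Your first paragraph is, in substance, the paper's entire proof: if $v$ has positive valence and at least four pebbles, the same outgoing arc can be traversed twice, contradicting Proposition 1.1. The sink case that you flag as the main obstacle is a genuine gap, but it is one that cannot be closed, because the statement is false for sinks as literally written. Take $G$ to be a single arc $(u,w)$ with two pebbles on $u$ and four on $w$: the assignment is fully traversable, $[S_G]$ is again a single arc, so $G\cong[S_G]$, yet the sink $w$ carries four pebbles. The paper itself later confirms this in Theorem 5.1, which allows ``any number of pebbles on the valence zero vertices'' in its fully traversable tree examples with $T\cong[S_T]$. No accounting of assignment counts or gradings will rescue this case: surplus pebbles sitting on a sink never enable or block any move, so they neither create nor destroy vertices of $[S_G]$.

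The paper's own proof sidesteps the issue by asserting that, since $(S_G)$ is fully traversable, $v$ ``is movable'' --- which holds only when $v$ has non-zero valence. In other words, the corollary is implicitly a statement about vertices of positive valence, and that is the only form in which it is ever invoked later (it is always applied to bound the pebbles on a vertex that must make a move, as in Theorems 2.2, 3.1, and 4.1). So the correct repair is not to finish your sink argument but to restrict the conclusion to vertices of non-zero valence; with that restriction, your first paragraph already constitutes a complete proof, and it coincides with the paper's.
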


\begin{proof}
Suppose by way of contradiction $v\in V(G)$ has more than three pebbles on it. Then since $(S_G)$ is fully traversable, $v$ is movable. Since it has more than three pebbles on it, it can traverse any outgoing edge twice. Thus, by the previous proposition this is a contradiction.
\end{proof}

\begin{corollary}
If $(S_G)$ is not fully traversable and $G\cong [S_G]$ then some edge of $G$ is traversed more than once. 
\end{corollary}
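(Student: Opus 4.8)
The plan is to run the edge-counting argument behind Proposition~1.1 in reverse. Since $G\cong[S_G]$ as directed graphs, $|E(G)|=|E([S_G])|$; write $m$ for this common value and assume $m\ge 1$ (if $m=0$ then $G$ is edgeless, which we exclude — compare the ``more than one vertex'' hypothesis of the preceding corollary, which together with $G\cong[S_G]$ forces an edge).

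First I would set up a map $\phi\colon E([S_G])\to E(G)$. Recall that the edges of the Hasse diagram $[S_G]$ correspond to single pebbling moves: a chain of two or more moves decreases the total pebble count by more than one and passes through a strictly intermediate state, so it is not a covering relation. Each single move removes two pebbles from some $v$ and adds one to some $w$ with $(v,w)\in E(G)$, so it traverses the edge $(v,w)$; moreover that edge is uniquely recovered from the move, since the induced change $-2\chi_v+\chi_w$ in the pebble distribution determines both $v$ and $w$ ($G$ being loopless). Hence $\phi$, sending an edge of $[S_G]$ to the edge of $G$ traversed by the corresponding move, is well defined.

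Then I would read the hypotheses through $\phi$. For an edge $e\in E(G)$, the number of times $e$ is traversed during the pebbling process is exactly $|\phi^{-1}(e)|$, and therefore
\[\sum_{e\in E(G)}|\phi^{-1}(e)| \;=\; |E([S_G])| \;=\; m \;=\; |E(G)|.\]
That $(S_G)$ is not fully traversable means some edge is never traversed, i.e.\ some summand above equals $0$; since the sum equals the number of summands, some other summand must be at least $2$, i.e.\ that edge is traversed more than once. (Put differently, $\phi$ is a map between finite sets of equal cardinality, so failing to be surjective forces failing to be injective.)

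I do not expect a genuine obstacle; the points requiring care are the well-definedness of $\phi$ — that each Hasse-diagram edge is one move traversing one determined edge of $G$ — and fixing ``traversed more than once'' to mean $|\phi^{-1}(e)|\ge 2$. If one prefers, the whole thing can be stated as the contrapositive of the proof of Proposition~1.1: assuming no edge of $G$ is traversed more than once, every edge is traversed at most once, so the identity $\sum_{e}|\phi^{-1}(e)|=|E(G)|$ forces every edge to be traversed exactly once; in particular every edge is traversed and $(S_G)$ is fully traversable.
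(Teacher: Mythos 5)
Your proposal is correct and is essentially the paper's own argument: both rest on the identity $|E([S_G])|=\sum_{e\in E(G)}(\text{number of traversals of }e)$ together with $|E(G)|=|E([S_G])|$, the paper phrasing it as a contradiction ("at most once" plus one missed edge forces $|E([S_G])|<|E(G)|$) and you phrasing it directly as a non-surjective map between equinumerous finite sets failing to be injective. Your added care about well-definedness of $\phi$ and the edgeless case only makes explicit what the paper leaves implicit.
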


\begin{proof}
If each edge were traversed at most once then $|E([S_G])|<|E(G)|$ since $(S_G)$ is not fully traversable. This is clearly a contradiction since $G\cong [S_G]$. 
\end{proof}

\section{Downward 4-Cycle}

The following graph is what we will call a \textbf{downward 4-cycle}. 

\begin{figure}[h]
\centering

\begin{tikzpicture}[thick, ->]

\tikzstyle{vertex}=[circle,fill=black,inner sep=2pt]

\node[draw, vertex] (1) at (0,0) {};
\node[draw, vertex] (2) at (-1,-1) {};
\node[draw, vertex] (3) at (1,-1) {};
\node[draw, vertex] (4) at (0,-2) {};

\draw (1) -- (2);
\draw (1) -- (3);
\draw (3) -- (4);
\draw (2) -- (4);

\end{tikzpicture}

\end{figure}

\begin{theorem}
Let $G$ have pebbling assignment $(S_G)$. Then $[S_G]$ contains a downward 4-cycle if and only if $G$ has two movable vertices or a $2$-movable vertex with at least four pebbles on it.
\end{theorem}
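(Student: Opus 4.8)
The plan is to translate everything into pebbling moves and exploit that moves commute. Regard a pebbling assignment as its vector of pebble counts; a move along an oriented edge $(v,w)$ replaces this vector by the one obtained by subtracting $2$ from the $v$-coordinate and adding $1$ to the $w$-coordinate, so the combined effect of two moves on the vector is additive and independent of the order in which they are performed. Hence, whenever two \emph{distinct} moves $m_1,m_2$ are available at an assignment $A$, $m_2$ is still available at $m_1(A)$, and $m_1$ is still available at $m_2(A)$, the two walks $A\to m_1(A)\to m_2(m_1(A))$ and $A\to m_2(A)\to m_1(m_2(A))$ end at the \emph{same} assignment $D$; together with $A$ these are four distinct assignments (they carry $N$, $N-1$, $N-1$, $N-2$ pebbles respectively, and the two middle ones differ since distinct moves give distinct resulting vectors), so $[S_G]$ contains a downward $4$-cycle. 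Call this the reconvergence observation.

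For the ``if'' direction I would exhibit such a pair $m_1,m_2$ at the assignment $A$ witnessing the hypothesis (the initial $(S_G)$, or whichever reachable assignment exhibits the movability in question). If $A$ has two distinct movable vertices $u\neq v$, let $m_1$ be a move out of $u$ and $m_2$ a move out of $v$: performing $m_1$ does not decrease the pebble count of $v$ and leaves its valence alone, so $m_2$ is still available afterwards, and symmetrically. If instead $A$ has a $2$-movable vertex $v$ with at least four pebbles, let $m_1,m_2$ be moves out of $v$ along two distinct out-edges: after either move $v$ still has at least two pebbles, so the other move remains available. In both situations the reconvergence observation yields the downward $4$-cycle.

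For the ``only if'' direction, suppose $[S_G]$ contains a downward $4$-cycle $A\to B$, $A\to C$, $B\to D$, $C\to D$ with $B\neq C$; then $A$ admits two distinct moves. If $A$ has two movable vertices we are done, so assume $A$ has a unique movable vertex $v$; then both moves out of $A$ are out of $v$, hence $v$ has valence at least $2$ and is $2$-movable, and it remains to show that $v$ carries at least four pebbles in $A$. Writing the two moves as $(v,t_1)$ and $(v,t_2)$ with $t_1\neq t_2$, suppose for contradiction that $v$ has at most three pebbles in $A$. Then $v$ has at most one pebble in $B$, so $v$ is not movable in $B$; as $v$ was the only movable vertex of $A$ and $B$ agrees with $A$ off $\{v,t_1\}$, the only vertex that can be movable in $B$ is $t_1$. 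Since $B$ is not a leaf, the move $B\to D$ is therefore of the form $(t_1,t_3)$, and likewise $C\to D$ is of the form $(t_2,t_4)$. Equating the two expressions for the count vector of $D$ and cancelling the common part gives $e_{t_3}-e_{t_1}=e_{t_4}-e_{t_2}$, where $e_p$ is the indicator vector of $p$; since $t_1\neq t_2$ this forces $t_3=t_1$, so $(t_1,t_3)$ is a loop, which is impossible in an oriented graph. Hence $v$ has at least four pebbles, and is a $2$-movable vertex with at least four pebbles on it.

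The routine parts are the distinctness bookkeeping and the availability checks in the ``if'' direction; the substantive step is the ``only if'' argument above, namely excluding ``accidental'' reconvergence of $B$ and $C$ through moves unrelated to the branch at $A$, which the short indicator-vector computation handles (always ending in a forbidden loop) and from which the bound ``at least four pebbles'' then drops out. One point worth settling first is the precise reading of ``$G$ has two movable vertices'': for the statement to hold it should be understood as ``some reachable assignment has two movable vertices'' (consistently with the paper's use of ``movable''), since two vertices that become movable only at different stages of the process need not create any branching in $[S_G]$.
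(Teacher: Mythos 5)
Your proof is correct, and the ``if'' direction is the same commuting-moves argument as the paper's. The ``only if'' direction reaches the same conclusion by a noticeably cleaner route. The paper splits on whether the two moves out of $A$ share a source, then chases the tuple $(n,k,m)$ through several sub-cases to rule out each possible identity of the move $B\to D$ (a case analysis whose write-up is somewhat garbled -- e.g.\ it asserts $B\to D$ pebbles \emph{to} $x$, which is actually impossible since $D$ would then carry $k+2$ pebbles on $x$). You instead split on whether $A$ has a unique movable vertex, observe that if the unique movable vertex $v$ has at most three pebbles then the only candidate source for the move out of $B$ is $t_1$ (and out of $C$ is $t_2$), and then dispatch everything with the single identity $e_{t_3}-e_{t_1}=e_{t_4}-e_{t_2}$, which forces a loop. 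This buys a shorter, less error-prone argument, at the cost of nothing. Your closing caveat about where ``movable'' is evaluated is not pedantry but a genuine correction: for the oriented path $u_1\to u_2\to u_3$ with pebble counts $(4,1,0)$, the assignment graph contains a downward $4$-cycle (rooted at the reachable state $(2,2,0)$), yet the initial assignment has only one movable vertex and no $2$-movable vertex at all; so the theorem is true only under the ``some reachable assignment'' reading, which is what the paper's own converse implicitly proves (its conclusion concerns the state $A$, not $(S_G)$) but never states.
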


\begin{proof}
Let $v$ be a $2$-movable vertex of $G$ with at least four pebbles on it. Then since $v$ is $2$-movable there exists two edges $e_1,e_2$ leaving $v$. Since $e_1\neq e_2$ we have that there are two distinct vertices $x,y\in V(G)$ giving the edges $e_1$ and $e_2$ as $(v,x)$ and $(v,y)$, respectively. So we have the sequence of pebbling moves that is from $v$ to $y$ and then $v$ to $x$ as well as the sequence of pebbling moves that is from $v$ to $x$ and then from $v$ to $y$. Since $x\neq y$ and the end state is not dependent on the order of independent moves, this gives us a downward 4-cycle in $[S_G]$. 

Let $v_1,v_2$ be two distinct movable vertices of $G$. Then as before we have edges $(v_1,w_1)$ and $(v_2,w_2)$ and there are at least two pebbles on both $v_1$ and $v_2$. Note that we do not require that $w_1\neq w_2$ only that $v_1\neq v_2$. So we have the sequence of pebbling moves that is from $v_1$ to $w_1$ and then from $v_2$ to $w_2$ as well as the sequence of pebbling moves that is from $v_2$ to $w_2$ and then from $v_1$ to $w_1$. As explained above, this creates a downward 4-cycle in $[S_G]$. 

Conversely, suppose that $[S_G]$ contains a downward 4-cycle. So $[S_G]$ contains the following cycle.

\begin{figure}[h]
\centering

\begin{tikzpicture}[thick, ->]

\node[draw, circle] (1) at (0,0) {A};
\node[draw, circle] (2) at (-1,-1) {B};
\node[draw, circle] (3) at (1,-1) {C};
\node[draw, circle] (4) at (0,-2) {D};

\draw (1) -- (2);
\draw (1) -- (3);
\draw (3) -- (4);
\draw (2) -- (4);

\end{tikzpicture}

\end{figure}

First suppose that the pebbling move from state $A$ to state $B$ and the pebbling move from state $A$ to state $C$ pebble from the same vertex $v\in V(G)$. Since we assume $B\neq C$ as states, we must have that $v$ is a movable vertex of valence at least two. Consider the edges $(v,x)$ and $(v,y)$ in $E(G)$ where the pebbling move from $A$ to $B$ corresponds to pebbling from $v$ to $x$ and the move from $A$ to $C$ corresponds to pebbling from $v$ to $y$. Again since $B\neq C$, we must have that $x\neq y$. Consider the tuple $(n,k,m)_A$ denoting that at state $A$, $n$ is the number of pebbles on vertex $v$, $k$ the number on vertex $x$, and $m$ the number on vertex $y$. So for instance we have the tuples $(n-2,k+1,m)_B$ and $(n-2,k,m+1)_C$. Now since both must end at state $D$, the tuples must be altered by pebbling moves to be equal. If $B$ to $D$ corresponded to the pebbling move from $y$ to $x$ then we have $k+2$ pebbles on vertex $x$ at state $D$ which is not possible to achieve in one pebbling move starting at state $C$. If $B$ to $D$ corresponded to the pebbling move from $x$ to $y$ then we have $(n-2,k-1,m+1)_D$. But starting from $C$, we must have the pebbling move from $x$ to $z$ where $z$ is distinct from $v$ and $y$ in order to achieve the same tuple. But this is not possible, since $z$ was never pebbled to in the $A$ to $B$ to $D$ sequence. A symmetric argument shows that $C$ to $D$ cannot correspond to a pebbling move between $x$ and $y$. So $B$ to $D$ must correspond to the pebbling move from $u$ to $x$ where $u$ is distinct from $v$ and $y$. (Note it must pebble to $x$ because if we pebble to $v$ or $y$ then we would need to change all three coordinates of $(n-2,k,m+1)_C$ to get the same tuple which is not possible in one pebbling move.) So if $u\neq v$ then we have found two movable pebbles and if $u=v$ then since we have moved from $v$ consecutively, there must be at least four pebbles on $v$ (which we already know is $2$-movable). 

If the pebbling move from $A$ to $B$ and the pebbling move from $A$ to $C$ pebble from different vertices then we clearly have two movable vertices. 
\end{proof}

\begin{theorem}
Let $G$ be a graph with a downward 4-cycle and a fully traversable pebbling assignment $(S_G)$. Then $G$ is not isomorphic to $[S_G]$.
\end{theorem}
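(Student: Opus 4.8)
The plan is to argue by contradiction. Suppose $G \cong [S_G]$. Since $G$ contains a downward 4-cycle, so does $[S_G]$, and therefore the preceding theorem applies: either $G$ has a $2$-movable vertex with at least four pebbles on it, or $G$ has two distinct movable vertices. I would dispatch the first possibility at once. A graph containing a downward 4-cycle has at least four vertices, hence more than one, so the earlier corollary applies: since $(S_G)$ is fully traversable and $G \cong [S_G]$, no vertex of $G$ carries more than three pebbles. This contradicts the existence of a $2$-movable vertex with at least four pebbles on it.

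It then remains to rule out the case of two distinct movable vertices $v_1 \neq v_2$, with outgoing edges $(v_1, w_1)$ and $(v_2, w_2)$ of $G$ (the targets $w_1, w_2$ need not be distinct). The idea is that the pebbling moves ``$v_1$ to $w_1$'' and ``$v_2$ to $w_2$'' commute when started from $(S_G)$: performing one does not destroy the ability to perform the other — the only thing to check is that, say, $v_2$ is still movable after the move ``$v_1$ to $w_1$'', which is clear since its valence is unchanged and its pebble count does not decrease (it is unchanged unless $v_2 = w_1$, in which case it increases), and symmetrically — and the resulting state is independent of the order. Hence the move along the edge $(v_1, w_1)$ is performed at the state $(S_G)$ and also at the distinct state obtained from $(S_G)$ by the single move ``$v_2$ to $w_2$'', so the edge $(v_1, w_1)$ is traversed at two different stages of the pebbling process. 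Since $v_1 \neq v_2$, the edges $(v_1, w_1)$ and $(v_2, w_2)$ are genuinely distinct, so this really exhibits one edge being traversed more than once. This contradicts the Proposition, according to which a fully traversable $(S_G)$ with $G \cong [S_G]$ traverses every edge exactly once.

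Both cases being impossible, we conclude $G \not\cong [S_G]$. The first case is essentially free once the classification theorem and the pebble-count corollary are available; the real content sits in the second case, and even there the argument is short. I expect the only fiddly point to be making the ``commuting moves'' step airtight — namely verifying that neither move can disable the other by checking the overlapping-vertex configurations (where a target coincides with the other source), and recording that $(v_1,w_1)\neq(v_2,w_2)$ because $v_1\neq v_2$ — rather than the underlying idea itself.
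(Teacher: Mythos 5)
Your proof is correct, and its main case runs along a genuinely different line from the paper's. Both proofs begin identically: invoke Theorem 2.1 to get either a $2$-movable vertex with at least four pebbles or two distinct movable vertices, and kill the first alternative with Corollary 1.1. For the remaining case the paper argues via a valence count at the root: since $(S_G)$ is fully traversable the root $r$ of $G$ must itself be movable, contributing $\deg(r)=m$ initial pebbling moves, and a second movable vertex contributes at least one more, so the root of $[S_G]$ has valence at least $m+1$, contradicting $\deg(r)=\deg(R)$. You instead observe that the move along $(v_1,w_1)$ can be performed both at the initial state and at the distinct state reached by first performing $v_2\to w_2$ (which cannot disable $v_1$, as you check), so the edge $(v_1,w_1)$ of $G$ accounts for two distinct edges of $[S_G]$ and is therefore traversed more than once, contradicting Proposition 1.1. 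Under the paper's own reading of ``traversed exactly once'' --- visible in the proof of Corollary 1.2, where traversal counts are identified with edge counts in $[S_G]$ --- this is airtight. Your route has two mild advantages: it does not require arguing that the root of $G$ is movable or matching it with the root of $[S_G]$, and it applies verbatim if the two movable vertices guaranteed by Theorem 2.1 occur at some reachable intermediate state rather than at $(S_G)$ itself (a point on which the paper's root-degree argument is more delicate, since Theorem 2.1's converse locates the movable vertices at the top state of the $4$-cycle, wherever that sits in $[S_G]$). The paper's argument, in exchange, is a self-contained degree comparison that does not lean on Proposition 1.1 beyond the setup. Both are valid and of comparable length.
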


\begin{proof}
Suppose by way of contradiction that $G\cong [S_G]$. Then since $G$ has contains a downward 4-cycle, $[S_G]$ contains a downward 4-cycle and so by Theorem 2.1, $G$ has two movable vertices or a $2$-movable vertex with at least four pebbles on it. However, by Corollary 1.1, no vertex can have four or more pebbles on it. So $G$ has two movable vertices. Since $G\cong [S_G]$, $G$ is a rooted graph. Let $r$ be the root vertex of $G$. Since $r$ is the root of $G$ and $(S_G)$ is fully traversable, $r$ must be movable. Let $\deg(r)=m$. Now let $R$ be the root vertex of $[S_G]$. We have the $m$ distinct starting pebbling moves from $r$ to each of its neighbors. Then since $G$ has two movable vertices, we have another starting pebbling move from that vertex. Hence, $\deg(R)\geq m+1$. But since $G\cong [S_G]$ we must have that $\deg(r)=\deg(R)$. Hence, we have a contradiction.
\end{proof}

\begin{corollary}
Let $G$ be the downward 4-cycle. The following assignments are the only assignments on $G$ where $G\cong [S_G]$ (up to graph symmetry). 

\begin{figure}[h]
\centering 

\begin{tikzpicture}[thick, ->]

\node[draw, circle] (1) at (0,0) {0};
\node[draw, circle] (2) at (-1,-1) {2};
\node[draw, circle] (3) at (1,-1) {2};
\node[draw, circle] (4) at (0,-2) {$n$};

\node[draw, circle] (5) at (4,0) {1};
\node[draw, circle] (6) at (3,-1) {2};
\node[draw, circle] (7) at (5,-1) {2};
\node[draw, circle] (8) at (4,-2) {$n$};

\node[draw, circle] (9) at (8,0) {0};
\node[draw, circle] (10) at (7,-1) {2};
\node[draw, circle] (11) at (9,-1) {3};
\node[draw, circle] (12) at (8,-2) {$n$};

%-----

\node[draw, circle] (13) at (0,-4) {1};
\node[draw, circle] (14) at (-1,-5) {2};
\node[draw, circle] (15) at (1,-5) {3};
\node[draw, circle] (16) at (0,-6) {$n$};

\node[draw, circle] (17) at (4,-4) {0};
\node[draw, circle] (18) at (3,-5) {3};
\node[draw, circle] (19) at (5,-5) {3};
\node[draw, circle] (20) at (4,-6) {$n$};

\node[draw, circle] (21) at (8,-4) {1};
\node[draw, circle] (22) at (7,-5) {3};
\node[draw, circle] (23) at (9,-5) {3};
\node[draw, circle] (24) at (8,-6) {$n$};

\draw (1) -- (2);
\draw (1) -- (3);
\draw (3) -- (4);
\draw (2) -- (4);

\draw (5) -- (6);
\draw (5) -- (7);
\draw (7) -- (8);
\draw (6) -- (8);

\draw (9) -- (10);
\draw (9) -- (11);
\draw (11) -- (12);
\draw (10) -- (12);

\draw (13) -- (14);
\draw (13) -- (15);
\draw (15) -- (16);
\draw (14) -- (16);

\draw (17) -- (18);
\draw (17) -- (19);
\draw (19) -- (20);
\draw (18) -- (20);

\draw (21) -- (22);
\draw (21) -- (23);
\draw (23) -- (24);
\draw (22) -- (24);

\end{tikzpicture}

\end{figure}

\end{corollary}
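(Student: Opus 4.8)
The plan is to read the constraints straight off the pebbling dynamics on the downward 4-cycle. Label its vertices $a$ (the source), $b$ and $c$ (the two middle vertices), and $d$ (the sink), with edges $a\to b$, $a\to c$, $b\to d$, $c\to d$, and record an assignment as a tuple $(p,q,r,s)$ giving the pebbles on $a,b,c,d$. Since a pebbling move strictly lowers the total pebble count, the initial assignment $(S_G)$ is the unique vertex of $[S_G]$ with in-degree $0$; likewise $a$ is the unique vertex of $G$ with in-degree $0$, and it has out-degree $2$. Hence any directed-graph isomorphism $G\cong[S_G]$ sends $a$ to $(S_G)$, sends $d$ to the sink of $[S_G]$, and sends $\{b,c\}$ to the two intermediate states. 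Translating the shape of the 4-cycle into dynamics: from $(S_G)$ exactly two pebbling moves are available; after either one, exactly one further move is available; and both of these length-two sequences terminate at a common state from which no move is possible. I would use these four conditions, in that order, to solve for $(p,q,r,s)$.

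First I would determine when exactly two moves are available from $(p,q,r,s)$: the candidates are $a\to b$ and $a\to c$ (each needing $p\ge 2$), $b\to d$ (needing $q\ge 2$), and $c\to d$ (needing $r\ge 2$), so there are exactly two precisely when either $p\ge 2$ with $q,r\le 1$, or $p\le 1$ with $q,r\ge 2$. I would handle these two cases separately.

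In the case $p\le 1$, $q,r\ge 2$, the two moves produce $B=(p,q-2,r,s+1)$ and $C=(p,q,r-2,s+1)$. From $B$ the move $c\to d$ is always available and $b\to d$ is available iff $q\ge 4$, so demanding exactly one move forces $q\le 3$, and symmetrically $r\le 3$; the forced moves from $B$ and from $C$ then both reach $D=(p,q-2,r-2,s+2)$, from which (as $q-2,r-2\le 1$) nothing moves. A short check shows $A=(S_G),B,C,D$ are pairwise distinct and carry only the edges $A\to B$, $A\to C$, $B\to D$, $C\to D$, so $[S_G]$ is indeed the downward 4-cycle for \emph{every} value of $s$; this yields $p\in\{0,1\}$, $q\in\{2,3\}$, $r\in\{2,3\}$, $s$ arbitrary, and since the only nontrivial automorphism of $G$ fixes $a,d$ and swaps $b,c$ (hence $q,r$), these eight tuples collapse to exactly the six assignments pictured. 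In the other case $p\ge 2$, $q,r\le 1$, the two moves give $(p-2,q+1,r,s)$ and $(p-2,q,r+1,s)$; requiring exactly one subsequent move from each first forces $p\le 3$ (else $a$ still offers two moves) and then $q=r=1$ (else one intermediate state is already a dead end), but then the two length-two sequences end at $(p-2,0,1,s+1)$ and $(p-2,1,0,s+1)$, which are distinct, contradicting that both must end at the common sink. So this case contributes nothing, and the six assignments are exactly those with $G\cong[S_G]$.

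The main obstacle is keeping the case analysis airtight: at each intermediate state one must extract \emph{every} consequence of ``exactly one move available'' — both the upper bounds ($p\le 3$, $q,r\le 3$) that stop extra moves from reopening at $a$ or repeating at $b,c$, and the lower bounds that stop the state from being a dead end — and then one must verify that no \emph{additional} reachable states or edges creep in, so that $[S_G]$ is genuinely the downward 4-cycle and not a larger graph. Once this bookkeeping is organized, the remaining computations are short and finite.
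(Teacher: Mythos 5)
Your proof is correct, but it takes a genuinely different route from the paper's. The paper derives the necessity half from its general machinery: it invokes Theorem 2.1 to conclude that $G$ must have two movable vertices or a $2$-movable vertex with at least four pebbles, then eliminates the bad configurations (four pebbles on the root, a movable root, four pebbles on a side vertex) by exhibiting small subassignments $(R_G)$ whose assignment graphs $[R_G]$ embed into $[S_G]$ and are visibly too large or of the wrong shape; Theorem 2.2 is used to cap the number of movable vertices at two. The sufficiency half is dispatched with ``by drawing out the possible pebbling moves one can easily check.'' You instead work directly from the structural constraints the isomorphism imposes -- the source of $G$ must map to the unique in-degree-zero vertex of $[S_G]$, forcing the out-degree/convergence pattern $2,1,1,0$ on the four states -- and solve for $(p,q,r,s)$ by an explicit two-case analysis. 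Your argument is more self-contained (it never needs Theorems 2.1 or 2.2, nor the subassignment-monotonicity fact the paper imports from the standard case) and it verifies sufficiency and necessity simultaneously rather than leaving the forward direction to an unstated check; what it gives up is reusability, since the paper's lemmas about downward 4-cycles and subassignments are designed to serve the later sections as well. One small point worth making explicit in your write-up: distinct pebbling moves from a given state of this graph always yield distinct states (the source vertex of the move loses two pebbles, so the resulting tuples differ), which is what lets you identify ``number of available moves'' with out-degree in $[S_G]$; you use this silently but it is easy to justify.
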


\begin{proof}
By drawing out the possible pebbling moves one can easily check that these all have the property that $G\cong [S_G]$. Now it suffices to prove that these are the only possible assignments. By Theorem 2.1, $G$ must have either two movable vertices or a $2$-movable vertex with at least four pebbles on it. Note that $G$ only has one vertex with a valence of at least two and that is the root. So consider $G$ with the assignment $(R_G)$ below.

\begin{figure}[h]
\centering

\begin{tikzpicture}[thick, ->]

\node[draw, circle] (1) at (0,0) {4};
\node[draw, circle] (2) at (-1,-1) {0};
\node[draw, circle] (3) at (1,-1) {0};
\node[draw, circle] (4) at (0,-2) {0};

\draw (1) -- (2);
\draw (1) -- (3);
\draw (3) -- (4);
\draw (2) -- (4);

\end{tikzpicture}

\end{figure}

This assignment has the assignment graph $[R_G]$ as follows. 

\begin{figure}[h]
\centering

\begin{tikzpicture}[thick, ->]

\tikzstyle{vertex}=[circle,fill=black,inner sep=2pt]

\node[draw, vertex] (1) at (0,0) {};
\node[draw, vertex] (2) at (-1,-1) {};
\node[draw, vertex] (3) at (1,-1) {};
\node[draw, vertex] (4) at (0,-2) {};
\node[draw, vertex] (5) at (-1,-2) {};
\node[draw, vertex] (6) at (1,-2) {};
\node[draw, vertex] (7) at (0,-3) {};

\draw (1) -- (2);
\draw (1) -- (3);
\draw (3) -- (4);
\draw (2) -- (4);
\draw (2) -- (5);
\draw (3) -- (6);
\draw (5) -- (7);
\draw (6) -- (7);

\end{tikzpicture}

\end{figure}

Then since any assignment $(S_G)$ on $G$ with a $2$-movable vertex with at least four pebbles on it has the $(R_G)$ as a subassignment, it follows from the standard case that $[R_G]$ will be isomorphic to a subgraph of $[S_G]$ and thus, $[S_G]$ cannot be isomorphic to $G$. Hence, $G$ has two movable vertices. By Theorem 2.2, $G$ cannot be fully traversable. Hence, $G$ must have exactly two movable vertices since the bottom vertex is never movable and thus three movable vertices makes the graph assignment fully traversable which is a contradiction.

Consider $G$ with the assignment $(R_G)$ below. 

\begin{figure}[h]
\centering

\begin{tikzpicture}[thick, ->]

\node[draw, circle] (1) at (0,0) {2};
\node[draw, circle] (2) at (-1,-1) {2};
\node[draw, circle] (3) at (1,-1) {0};
\node[draw, circle] (4) at (0,-2) {0};

\draw (1) -- (2);
\draw (1) -- (3);
\draw (3) -- (4);
\draw (2) -- (4);

\end{tikzpicture}

\end{figure}

Its assignment graph's node vertex has valence three since there are three distinct pebbling moves we can make. Then again since for any assignment $(S_G)$ with these two vertices as the chosen movable vertices, $(R_G)$ is a subassignment and thus $[R_G]$ is isomorphic to a subgraph of $[S_G]$. But since the root of $[R_G]$ has valence $3$, $[S_G]$ is not isomorphic to $G$. Thus, the root vertex cannot be movable. So the two movable vertices must be the two side vertices. Finally consider $G$ with the assignment $(R_G)$ below. 

\begin{figure}[h]
\centering

\begin{tikzpicture}[thick, ->]

\node[draw, circle] (1) at (0,0) {0};
\node[draw, circle] (2) at (-1,-1) {4};
\node[draw, circle] (3) at (1,-1) {2};
\node[draw, circle] (4) at (0,-2) {0};

\draw (1) -- (2);
\draw (1) -- (3);
\draw (3) -- (4);
\draw (2) -- (4);

\end{tikzpicture}

\end{figure}

This assignment has the assignment graph $[R_G]$ as follows. 

\begin{figure}[h]
\centering

\begin{tikzpicture}[thick, ->]

\tikzstyle{vertex}=[circle,fill=black,inner sep=2pt]

\node[draw, vertex] (1) at (0,0) {};
\node[draw, vertex] (2) at (-1,-1) {};
\node[draw, vertex] (3) at (1,-1) {};
\node[draw, vertex] (4) at (0,-2) {};
\node[draw, vertex] (5) at (1,1) {};
\node[draw, vertex] (6) at (2,0) {};

\draw (1) -- (2);
\draw (2) -- (4);
\draw (1) -- (3);
\draw (3) -- (4);
\draw (5) -- (1);
\draw (5) -- (6);
\draw (6) -- (3);

\end{tikzpicture}

\end{figure}

So by the same argument we have been using $[S_G]$ is not isomorphic to $G$. Thus, we have shown that $(S_G)$ must have both the side vertices as movable vertices, the root not a movable vertex, and the side vertices must have less than four pebbles each on them. Hence, we are done.
\end{proof}

\section{Larger Downward Cycles}

\begin{theorem}
Let $G$ be a downward $k$-cycle for $k>4$. Then for any pebble assignment $(S_G)$, $G$ is not isomorphic to $[S_G]$.
\end{theorem}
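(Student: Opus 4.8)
The plan is to assume $G\cong[S_G]$ and derive a contradiction, first pinning the assignment $(S_G)$ down almost completely and then computing $[S_G]$ by hand. When $k$ is odd one can finish at once, since $[S_G]$ is bipartite whereas an odd cycle is not; but the argument below works uniformly for all $k>4$. Since $[S_G]$ is rooted, so is $G$, and any isomorphism must carry the root $R$ of $[S_G]$ to the unique source $r$ of the downward $k$-cycle. As $r$ has out-degree $2$ in $C_k$, so does $R$, so exactly two pebbling moves are available from $(S_G)$.

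Next I would determine the movable vertices of $G$. A downward $k$-cycle with $k>4$ contains no downward $4$-cycle as a subgraph: the only vertex of out-degree at least $2$ is $r$, and for the two out-neighbors of $r$ to share a common out-neighbor the two directed $r$--$t$ paths making up the cycle would both have length $2$, forcing $k=4$. So by Theorem 2.1, $G$ has neither two movable vertices nor a $2$-movable vertex carrying at least four pebbles. As $[S_G]$ has more than one vertex, $G$ has at least one movable vertex, hence exactly one, say $v$; and the number of moves available from $(S_G)$ is just the valence of $v$, so $v$ has valence exactly $2$. Since $r$ is the unique vertex of valence at least $2$, this gives $v=r$, and since four or more pebbles is excluded, $v$ carries $2$ or $3$ pebbles.

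The core of the argument is then a direct description of $[S_G]$. Write $C_k$ as two internally disjoint directed paths $P\colon r=p_0\to p_1\to\dots\to p_a=t$ and $Q\colon r=q_0\to q_1\to\dots\to q_b=t$ with $a+b=k\geq 6$, so we may assume $a\geq 3$. Because $r$ is a source it never regains pebbles, so after it makes its (necessarily first) move it carries at most one pebble and is no longer movable. After the move $r\to p_1$ the only vertex that can be movable is $p_1$, since every other vertex still carries at most one pebble; thus all later moves are forced---each pushes a pair of pebbles one step further along $P$---until the process reaches $t$ or a vertex that was initially empty. The states so produced form a directed path $\alpha$ out of $R$ all of whose moves touch only vertices of $P\cup\{r\}$; symmetrically, $r\to q_1$ begins a forced path $\beta$ touching only $Q\cup\{r\}$. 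Every state reachable from $R$ lies on $\alpha$ or on $\beta$, and the two paths meet only at $R$: since $a\geq 2$ the vertex $p_1$ is internal to $P$, hence untouched along $\beta$, so it holds its original (at most one) pebble in every state of $\beta$, while in every non-root state of $\alpha$ it holds a strictly different number. Hence $[S_G]$ is a union of two directed paths, each of length at least one, meeting only at their common source $R$, and so has exactly two sinks; but the downward $k$-cycle has exactly one sink, contradicting $G\cong[S_G]$.

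I expect the main obstacle to be this last step: one must check that $\alpha$ and $\beta$ are both nonempty, that each terminates, and---the delicate point---that they share no vertex besides $R$. The pebble-count comparison at $p_1$ settles the last point cleanly, but it relies on having arranged $a\geq 2$, which is exactly the second place the hypothesis $k>4$ enters (the first being the absence of a downward $4$-cycle subgraph). A minor annoyance is the bookkeeping isolating ``exactly one movable vertex, of valence $2$,'' which needs valence $1$, valence $\geq 3$, and four-or-more pebbles each ruled out separately.
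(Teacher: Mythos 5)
Your proof is correct, and it takes a noticeably different (and in places more careful) route than the paper's. The paper splits on whether $(S_G)$ is fully traversable: in the traversable case it argues the root is the unique movable vertex and $[S_G]$ splays into two downward paths; in the non-traversable case it invokes Corollary 1.2 to produce a vertex with at least four pebbles, rules out the root via Theorem 2.1, and concludes the side vertex with four pebbles would be the only movable vertex, so $[S_G]$ has no cycle. You avoid that case split and Corollary 1.2 entirely: you pin down the assignment directly by matching the out-degree of the root of $[S_G]$ with the valence of the unique movable vertex, forcing it to be the source $r$ with two or three pebbles, and then compute $[S_G]$ as two forced directed paths out of $R$. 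Your treatment of the delicate point --- that the two branches cannot rejoin, verified by tracking the pebble count on $p_1$ --- is something the paper asserts implicitly (``two identical, downward paths'') without justification, so your argument is the more complete one; your final contradiction (two sinks versus one) is also a cleaner invariant than ``not a downward cycle.'' Two cosmetic slips: you write $a+b=k\geq 6$ where $k\geq 5$ is what the hypothesis gives (harmless, since $\max(a,b)\geq\lceil k/2\rceil\geq 3$ still holds, and odd $k$ dies to bipartiteness anyway), and when you say ``every other vertex still carries at most one pebble'' you should exempt the sink $t$, which may carry many pebbles but has valence zero and so is never movable.
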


\begin{proof}
Suppose by way of contradiction that there exists a pebbling assignment $(S_G)$ such that $G\cong [S_G]$. Clearly, $k$ must be an even integer such that $k\geq 4$ since $[S_G]$ is a simple bipartite graph. Since $G$ must have at least one movable vertex because $[S_G]$ is non-trivial, it has exactly one since otherwise $[S_G]$ would contain a downward $4$-cycle and hence $G$ would as well. If $(S_G)$ is fully traversable then the root node of $G$ must be the movable vertex. Also, since $(S_G)$ is fully traversable with only one movable vertex, the sides of the downward cycle must all have exactly one pebble on them. But then $[S_G]$ is the root vertex that stems into two identical, downward paths. Hence, $[S_G]$ is not a downward cycle and thus we have a contradiction. If $(S_G)$ is not fully traversable then by Corollary 1.2 some edge of $G$ is traversed more than once. However, this implies that some vertex of $G$ has at least four pebbles placed on it. If the root vertex had at least four pebbles on it then since it is valence two, by Theorem 2.1, $[S_G]$ would have a downward 4-cycle but this is a contradiction since $[S_G]\cong G$. Hence, the vertex with four pebbles placed on it must be on one of the sides of the cycle. But since this vertex must also be the only movable vertex, $[S_G]$ has no cycle and thus we reach a contradiction. 
\end{proof}

\section{Fully Traversable Cycles}

We now want to address any cycle, not just downward ones. Note that in an assignment graph the orientation of the edges will never give us a directed cycle so when we refer to cycles we are referring to the underlying undirected graph in both the assignment graph and the original graph. 

\begin{theorem}
Let $G$ be a graph with fully traversable pebbling assignment $(S_G)$. If $G$ contains a cycle then $G$ is not isomorphic to $[S_G]$.
\end{theorem}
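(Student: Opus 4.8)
The plan is to argue by contradiction: assume $G\cong[S_G]$ and show this forces $[S_G]$ to be a tree, which is impossible since $G$, hence $[S_G]$, contains a cycle. For the setup I would grade the vertices of $[S_G]$ by their distance from the root $R$ (their \emph{level}): since a pebbling move lowers the total number of pebbles by exactly one, the level of a state equals the number of moves used to reach it, and every directed edge of $[S_G]$ runs from a level-$\ell$ vertex to a level-$(\ell+1)$ vertex; the isomorphism carries this grading to $G$. Because $(S_G)$ is fully traversable and $G\cong[S_G]$, Proposition~1.1 says every edge of $G$ is traversed exactly once over all of $[S_G]$. I write $e_v$ for the unit vector at a vertex $v$, so a state is a nonnegative integer vector on $V(G)$ and a move along an edge $(v,t)$ adds $-2e_v+e_t$; $\deg^{+}$ denotes valence.

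Next I would establish two lemmas. \emph{Lemma A:} in every state $X$ of $[S_G]$, a movable vertex $v$ has $X(v)\le 3$ --- otherwise moving twice along one outgoing edge of $v$ would traverse that edge twice, contradicting Proposition~1.1. \emph{Lemma B:} every state of $[S_G]$ has at most one movable vertex. For the root $R=(S_G)$ this is the counting argument of Theorem~2.2: $r$ is a source, so it can never gain pebbles, and if it were not movable none of its outgoing edges could ever be traversed, contradicting full traversability; so $r$ is movable, and then the out-valence of $R$ in $[S_G]$, which equals $\sum_{v\text{ movable}}\deg^{+}(v)$, must match $\deg^{+}(r)$ under the isomorphism, forcing every other movable vertex to have valence $0$ --- impossible. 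For the induction, a state $X$ at level $\ell\ge 1$ has the form $X=W-2e_a+e_b$, where $a$ is the (by induction, unique) movable vertex of a predecessor $W$; Lemma~A gives $X(a)=W(a)-2\le 1$, so $a$ is no longer movable, and $b$ is the only other coordinate that changed, so at most $b$ is movable in $X$. A consequence to record: from a non-sink state all moves issue from its single movable (``active'') vertex, which is then spent, so along any directed path of $[S_G]$ the edges of $G$ that get traversed form a directed walk $u_0\to u_1\to\cdots$ in $G$; as the $u_i$ have strictly increasing level, this is a path with no repeated vertex.

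The key step is to show every non-root state of $[S_G]$ has exactly one parent, so that $[S_G]$ is a tree. I would induct on level: suppose a state $Z$ at level $\ell$ has two distinct parents $W_1,W_2$ (both at level $\ell-1$). By the inductive hypothesis the portion of $[S_G]$ on levels $<\ell$ is a tree rooted at $R$, so $W_1$ and $W_2$ have a deepest common ancestor $U$ there, with distinct children $U_1,U_2$ on the two branches; since $W_1\ne W_2$ sit at the same level, $U$ is a strict ancestor of each, hence lies at least two levels above $Z$. Writing $a$ for the active vertex of $U$, we get $U_i=U-2e_a+e_{c_i}$ with $c_1\ne c_2$ out-neighbours of $a$. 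By the consequence above, the moves from $U$ to $Z$ through $W_1$ traverse the edges of a directed path $a=p_0\to p_1\to\cdots\to p_L$ with $p_1=c_1$ (and $L\ge 2$ the number of those moves), and through $W_2$ a directed path $a=q_0\to q_1\to\cdots\to q_L$ with $q_1=c_2$. Computing $Z$ from $U$ along each path and equating the pebble vectors gives
\[
\sum_{i=1}^{L-1}e_{p_i}-e_{p_L}=\sum_{i=1}^{L-1}e_{q_i}-e_{q_L}.
\]
Comparing the $c_1$-coordinate now yields a contradiction: on the left it is $1$ (because $c_1=p_1$ appears among $p_1,\dots,p_{L-1}$ and the $p_i$ are distinct), while on the right it is $0$ (because $q_1=c_2\ne c_1$ and $q_2,\dots,q_L$ all lie at strictly larger level in $G$ than $c_1$). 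Hence $[S_G]$ is a connected rooted graph in which every non-root vertex has a unique parent, i.e.\ a tree --- contradicting that $G\cong[S_G]$ contains a cycle.

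I expect the main obstacle to be this last step, specifically the bookkeeping that the two move-sequences from $R$ to $Z$ coincide up to the deepest common ancestor $U$ and that the remaining two stretches are directed paths in $G$ of equal length sharing only their starting vertex $a$; once that is arranged, comparing a single coordinate finishes things. Lemmas~A and~B and the ``active vertex'' reduction should be routine given Proposition~1.1 and the degree count already used in Theorem~2.2; note that only the acyclicity of trees is needed at the end, not any parity argument about the cycle.
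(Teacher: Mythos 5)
Your proof is correct, but it takes a genuinely different route from the paper's. The paper also reduces to the situation of exactly one movable vertex (via Theorems 2.1 and 2.2) and then argues by counting sinks: if $G$ has $m$ valence-zero vertices, full traversability produces $m$ distinct terminal states, and the cycle produces two distinct move sequences into the same valence-zero vertex $v_i$, hence two distinct terminal states ending at $v_i$, so $[S_G]$ has at least $m+1$ sinks --- contradicting $G\cong[S_G]$. You instead prove the stronger structural fact that $[S_G]$ is a tree, by (i) propagating the single-movable-vertex property to \emph{every} state by induction on level (the paper only runs the degree count at the root), and (ii) ruling out a state with two parents via the deepest-common-ancestor argument and the comparison of the $c_1$-coordinate of the two pebble-vector expressions for $Z-U$. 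What each approach buys: the paper's argument is much shorter, but it silently assumes that two move sequences traversing different edge sets must land in different end states, which is exactly the kind of cancellation issue your coordinate computation addresses explicitly; your argument is longer but self-contained, and the conclusion that $[S_G]$ is a tree under a fully traversable assignment with $G\cong[S_G]$ feeds directly into the classification in Section 6. All the supporting steps you flag as routine (the level grading, Lemma A from Proposition 1.1, the "active vertex'' reduction, and $L\ge 2$ from $U$ being a strict common ancestor of two states at the same level) do check out.
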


\begin{proof}
Suppose by way of contradiction $G\cong [S_G]$. So $G$ must be connected and since it is fully traversable, every vertex must be reachable by some sequence of pebbling moves. By Theorems 2.1 and 2.2, $G$ has exactly one movable vertex. Then by Corollary 1.1, this movable vertex has at most three pebbles on it. Let $G$ have $m$ vertices with zero valence (note that $m>0$ since every sequence of pebbling moves is finite and $G\cong [S_G]$). Let $v_1,...,v_m$ be these $m$ vertices. Since $(S_G)$ is fully reachable, we have a sequence of pebbling moves from the movable vertex to each $v_i$ for all $1\leq i\leq m$. Since these sequences end with no movable vertices, and they each can only affect the number of pebbles on one of the $v_i$, they correspond to distinct end states. Thus, we have found $m$ vertices of $[S_G]$ with zero valence. However, since $(S_G)$ is fully reachable and $G$ contains a cycle, we can find a two distinct sequences of pebbling moves from the movable vertex to one of the $v_i$. Hence, these correspond to two different end states since each sequence traverses different edges. But since both sequences end at $v_i$, these two end states are distinct from the $m-1$ end states we found corresponding the the $v_j$ for $j\neq i$. Hence, we have found $m+1$ distinct end states and thus $[S_G]$ has $m+1$ vertices with valence zero. Since $G\cong [S_G]$ this is a contradiction.
\end{proof}

\section{Trees}

\begin{theorem}
Let $T$ be a downward directed, rooted tree. Let $(S_T)$ be the pebbling assignment on $T$ with two or three pebbles on the root, one pebble on each vertex with non-zero valence that is not the root vertex, and any number of pebbles on the valence zero vertices. Then $T\cong [S_T]$. 
\end{theorem}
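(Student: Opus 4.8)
The plan is to build an explicit directed-graph isomorphism $\phi\colon V(T)\to V([S_T])$. Root $T$ at $r$, and for a vertex $v$ whose root-to-$v$ path is $r=u_0,u_1,\dots,u_\ell=v$, let $\sigma_v$ be the state obtained from $(S_T)$ by performing, for $i=0,1,\dots,\ell-1$ in this order, the pebbling move from $u_i$ to $u_{i+1}$. First I would check that this is a legal sequence of moves, so that $\sigma_v$ really is a vertex of $[S_T]$: the root has two or three pebbles and has $u_1$ as an out-neighbour, so its move is legal; and for $1\le i\le\ell-1$ the vertex $u_i$ is a non-root vertex of non-zero valence (it has the child $u_{i+1}$), so it carries exactly one pebble in $(S_T)$, receives one from $u_{i-1}$, and hence has exactly two pebbles when its turn comes. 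Note $\sigma_r=(S_T)$, the root of $[S_T]$.

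Next I would record the shape of $\sigma_v$: the endpoint $v=u_\ell$ has one more pebble than in $(S_T)$; the interior vertices $u_1,\dots,u_{\ell-1}$ have zero pebbles; the root has zero or one pebble (or, if $v=r$, its original two or three); and every vertex off the path is untouched. From this the states $\sigma_v$ are pairwise distinct: if $v\ne v'$ then either their depths differ, in which case the total pebble counts differ, or their depths agree, in which case $v$ does not lie on the root-to-$v'$ path, so $\sigma_v$ and $\sigma_{v'}$ disagree on the number of pebbles at $v$.

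The heart of the argument is the claim that every state reachable from $(S_T)$ equals some $\sigma_v$, and that the states reachable from $\sigma_v$ by a single pebbling move are exactly $\{\sigma_w : w\text{ a child of }v\}$. I would prove this by induction on the number of moves from $(S_T)$, with base case $\sigma_r=(S_T)$. For the inductive step, in the state $\sigma_v$ the only vertex that can be movable is $v$ itself: the root has at most one pebble (or is $v$), the interior path vertices have none, every off-path non-root vertex of non-zero valence still holds its single pebble, and every leaf has valence $0$. Hence $\sigma_v$ is terminal exactly when $v$ is a leaf; and when $v$ has non-zero valence, any legal move is a move from $v$ to one of its children $w$, such a move is legal (whether $v$ holds two or three pebbles), and it produces precisely $\sigma_w$. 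Distinct children give distinct states by the previous paragraph, so the one-move successors of $\sigma_v$ are exactly the $\sigma_w$ over children $w$ of $v$; since every reachable state is obtained by iterating such moves, the claim follows.

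Finally I would assemble the isomorphism: $v\mapsto\sigma_v$ is a bijection $V(T)\to V([S_T])$ by the last two points, and $\{v,w\}$ is an edge of $T$ with $v$ the parent if and only if $\sigma_v\to\sigma_w$ is a covering edge of $[S_T]$; the orientations agree because $T$ is directed away from the root while $[S_T]$ is directed toward states requiring more moves. The only real subtlety is the inductive claim, and inside it the bookkeeping that the assignment graph neither branches more than $T$ (each non-root interior vertex carries just one pebble, so it fires once, and only after being reached) nor merges (forced by the tree structure together with distinctness of the $\sigma_v$); the clauses "two or three pebbles on the root" and "any number on the leaves" cause no difficulty precisely because the root fires exactly once in either case and leaves are never movable.
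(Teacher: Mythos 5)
Your proposal is correct and follows essentially the same route as the paper: both map each vertex $v$ to the state obtained by pebbling along the unique root-to-$v$ path and then verify this is a bijection preserving directed edges. Your write-up is somewhat more careful than the paper's (you explicitly check legality of the move sequence, the exact pebble distribution in each state $\sigma_v$, and the single-movable-vertex claim by induction), but the underlying argument is the same.
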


\begin{proof}
Let $T$ be a downward directed, rooted tree with pebbling assignment $(S_T)$ as described. For every vertex $v\in V(T)$, there is a unique directed path from the root vertex $r$ of $T$ to $v$. Since there is two or three pebbles on $r$ and one pebble on every non-zero valence vertex of $T$, $(S_T)$ is fully traversable and so we can add one pebble to vertex $v$ starting at $r$. In particular, we must start at $r$ since it is the only movable vertex. Let $A_v$ denote that state after the seqeuence of pebbling moves adding one pebble to $v$. We know that $A_v$ is uniquely determined by $v$ since it is uniquely determined by the directed path from $r$ to $v$ which is unique. Now consider the map $\psi:T\to [S_T]:v\mapsto A_v$. If $A_{v_1}=A_{v_2}$ then the path from $r$ to $v_1$ and the path from $r$ to $v_2$ are the same. Hence, $v_1=v_2$ and so $\psi$ is injective. Given any pebbling state, it must start at $r$ and at any time there is only one movable vertex and so the sequence of pebbling states follows as directed path. Hence, $\psi$ is surjective as well. If $(v_1,v_2)$ is a directed edge in $T$ then we can go from state $A_{v_1}$ to state $A_{v_2}$ by pebbling from $r$ to $v_1$ to $v_2$. So $(A_{v_1},A_{v_2})\in E([S_T])$. If $(A_{v_1},A_{v_2})$ is a directed edge in $[S_T]$ then we can get from state $A_{v_1}$ to state $A_{v_2}$ in one pebbling move. Since $A_{v_1}$ ends at vertex $v_1$ and $A_{v_2}$ ends at vertex $v_2$, we must be able to pebble from $v_1$ to $v_2$ in one move. Hence, $(v_1,v_2)\in E(T)$. So $\psi$ is an isomorphism. 
\end{proof}

\section{Classification of Fully Traversable Graphs}

We now want to precisely classify all graph $G$ with fully traversable pebbling assignments $(S_G)$ such that $G\cong [S_G]$. We have shown that $G$ cannot contain a cycle. Since $[S_G]$ is always a rooted, downward directed, simple graph, $G$ must be a downward directed, rooted tree. Furthermore, if $G$ has two movable vertices then $[S_G]$ contains a downward 4-cycle which means $G$ cannot be isomorphic to $[S_G]$. Also, since $(S_G)$ is fully traversable, the root node must be the movable vertex and it can only have two or three pebbles on it. Furthermore, since $(S_G)$ is fully traversable every non-zero valence vertex must have exactly one pebble on it. So by the previous section we have shown that a graph $G$ with fully traversable pebbling assignment $(S_G)$ has $G\cong [S_G]$ if and only if $G=T$ and $(S_G)=(S_T)$ as described in the previous section.

\section{Not Fully Traversable Graphs}

\begin{definition}
Let $P_m$ be a oriented path. Then a \textbf{simple} pebbling assignment on $P_m$ is an assignment with two or three pebbles on the source vertex, one pebble on each non-sink vertex, and any number of pebbles on the sink vertex. 
\end{definition}

Consider oriented paths $P_{n_1},...,P_{n_r}$ each with a simple pebbling assignment. Let us label each path such that we label the source of path $P_{n_i}$ as $a_{i,1}$, the next vertex, $a_{i,2}$, and continue until we label the sink $a_{i,n_i}$. Now consider the graph $\square_{i=1}^r P_{n_i}$. Each vertex can be labeled by coordinates relating to the Cartesian product. Consider the path
\begin{align*}
(a_{1,1},a_{2,n_2},...,a_{r,n_r}),(a_{1,2},a_{2,n_2},...,a_{r,n_r}),...,(a_{1,n_1},a_{2,n_2},...,a_{r,n_r}).
\end{align*}
Clearly, this path is isomorphic to $P_{n_1}$. Hence, we pebble it with $(S_{P_{n_1}})$. Similarly we pebble 
\begin{align*}
a_{1,n_1},a_{2,1},...,a_{r,n_r}),(a_{1,n_1},a_{2,2},...,a_{r,n_r}),...,(a_{1,n_1},a_{2,n_2},...,a_{r,n_r})
\end{align*} 
 with $(S_{P_{n_2}})$ and so on. Note that we may have a discrepancy of what to pebble $(a_{1,n_1},a_{2,n_2},...,a_{r,n_r})$, but this does not matter since this is a sink vertex. Hence, we can put any number of pebbles here as we did with each $P_{n_i}$. Then put zero or one pebbles on each remaining vertex. We will call this a simple pebbling on $\square_{i=1}^r P_{n_i}$. 

\begin{theorem}
Let $G=\square_{i=1}^r P_{n_i}$ have a simple pebbling on it. Then $G\cong [S_G]$. 
\end{theorem}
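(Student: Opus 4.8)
The plan is to prove the sharper statement that $[S_G]$ is isomorphic, as a directed graph, to the grid $G=\square_{i=1}^r P_{n_i}$ itself, by tracking the entire pebbling process with a single ``progress'' coordinate for each path factor. I will assume every $n_i\ge 2$: a factor $P_1$ is a single vertex that neither changes the product nor admits a pebbling move, so it may be deleted, and for $r\le 1$ the statement is trivial or is exactly Theorem 5.1. Write $c=(a_{1,n_1},\dots,a_{r,n_r})$ for the common sink corner, and for $1\le t\le n_i$ let $p_{i,t}$ be the vertex with $a_{i,t}$ in coordinate $i$ and $a_{j,n_j}$ in every other coordinate; thus $p_{i,1}=s_i$ is the source of the $i$-th axis path, $p_{i,n_i}=c$, and the $p_{i,t}$ with $1<t<n_i$ are its interior vertices. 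Call a vertex with at least two coordinates strictly below maximum a \emph{remaining} vertex; the remaining vertices, the interiors of the $r$ axis paths, and $c$ partition $V(G)$. From the product orientation one reads off that $c$ has valence $0$, that each $p_{i,t}$ with $t<n_i$ has valence exactly $1$ with its unique out-edge incrementing coordinate $i$, and that in general the vertex with coordinates $(a_{1,t_1},\dots,a_{r,t_r})$ has valence $|\{\, i : t_i<n_i \,\}|$.

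For a tuple $\ell=(\ell_1,\dots,\ell_r)$ with $1\le\ell_i\le n_i$, let $\sigma_\ell$ be the state obtained from $(S_G)$ by pushing, for each $i$, one pebble step by step along the $i$-th axis path from $p_{i,1}$ up to $p_{i,\ell_i}$. I would show, by induction on $\sum_i(\ell_i-1)$, that $\sigma_\ell$ is well defined --- the pushes along distinct axis paths commute, because those paths meet only at $c$, which has valence $0$ and is never movable --- and that $\sigma_\ell$ satisfies the explicit invariant: every remaining vertex keeps its initial $0$ or $1$ pebbles; $c$ has its initial count plus $|\{\, i : \ell_i=n_i \,\}|$; and on axis path $i$, the source $s_i$ has its initial count if $\ell_i=1$ and that count minus $2$ otherwise, while $p_{i,t}$ has $0$ pebbles for $2\le t<\ell_i$, has $2$ pebbles if $1<\ell_i<n_i$ and $t=\ell_i$, and has $1$ pebble for $\ell_i<t<n_i$. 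Combined with the valence computation, this invariant shows that the vertices of $\sigma_\ell$ that are both of valence $\ge 1$ and carry at least $2$ pebbles are precisely the ``frontier'' vertices $p_{i,\ell_i}$ with $\ell_i<n_i$; since each such vertex has valence exactly $1$, the only pebbling moves available at $\sigma_\ell$ are, one per index $i$ with $\ell_i<n_i$, the move advancing the $i$-th frontier, which produces $\sigma_{\ell'}$ where $\ell'$ agrees with $\ell$ except $\ell'_i=\ell_i+1$. Thus no pebble is ever placed on a remaining vertex or removed from $c$, so every state reachable from the root $(S_G)=\sigma_{(1,\dots,1)}$ is some $\sigma_\ell$, and the invariant recovers $\ell$ from the state, so distinct tuples give distinct states.

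It remains to assemble the isomorphism. By the previous paragraph $V([S_G])=\{\,\sigma_\ell : 1\le\ell_i\le n_i\,\}$, and $\sigma_\ell\to\sigma_{\ell'}$ is a directed edge of $[S_G]$ exactly when $\ell'$ is obtained from $\ell$ by adding $1$ to some entry $\ell_i<n_i$. Define $\phi(\sigma_\ell)=(a_{1,\ell_1},\dots,a_{r,\ell_r})$. This is a bijection onto $V(G)$, and comparing the edge descriptions of $[S_G]$ and of $\square_{i=1}^r P_{n_i}$ shows $\sigma_\ell\to\sigma_{\ell'}$ in $[S_G]$ if and only if $\phi(\sigma_\ell)\to\phi(\sigma_{\ell'})$ in $G$; hence $\phi$ is an isomorphism of directed graphs $[S_G]\cong\square_{i=1}^r P_{n_i}=G$, sending the root $\sigma_{(1,\dots,1)}$ to the source corner $(a_{1,1},\dots,a_{r,1})$.

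I expect the main obstacle to be the inductive verification of the invariant, in particular the assertion that no unexpected move is ever available: that pushes along distinct axis paths really do not interfere, that no pebble ever leaks into the bulk of the grid or back out of the sink corner, and that the full state --- not merely some coarse summary --- is determined by the progress tuple, so that no reordering of moves can merge two reachable states or split one. Everything else is bookkeeping with the product orientation and the explicit pebble counts, apart from the minor modeling point, flagged above, of disposing of degenerate factors $P_1$ and the small cases $r=0,1$.
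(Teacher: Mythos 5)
Your proposal is correct and follows essentially the same route as the paper: both identify each reachable state with the tuple recording how far the pebble front has advanced along each of the $r$ axis paths meeting at the sink corner, and map that tuple to the corresponding coordinate vertex of $\square_{i=1}^r P_{n_i}$. The only difference is one of rigor — your explicit invariant and induction substantiate the independence and bijectivity claims that the paper asserts without detail.
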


\begin{proof}
By our construction of $G$, pebbling moves on different paths are independent of each other. Hence, every vertex of $[S_G]$ can be labelled with a sequence of pebbling moves (where order does not matter). Let $b_i$ refer to the unique pebbling move on the path isomorphic to $P_{n_i}$. Furthermore, we denote $b_i^2$ as two moves on that path and so on. Hence, we can label every vertex of $[S_G]$ where the start state is the empty label. Now consider $f:V([S_G])\to V(G)$ where $f:b_1^{k_1}\cdots b_r^{k_r}\mapsto (a_{1,k_1},...,a_{r,k_r})$. It's easy to see that this is a bijection. Let us check now that it preserves edges and their orientation. An edge of $G$ has all coordinates fixed except one which goes from $a_{i,j}$ to $a_{i,j+1}$ (we call this positively oriented). Similarly, an edge of $[S_G]$ increases the exponent of one $b_i$ by one (again this is positively oriented). Hence, $f$ preserves edges and positive orientation and is thus an isomorphism of directed graphs. 
\end{proof}

\begin{definition}
Let $P_n$ be an oriented path. Then an \textbf{almost simple} pebbling assignment of $P_n$ is a simple pebbling or one of the following:
\begin{enumerate}[1.]
\item $P_n$ with $k$ pebbles on the vertex adjacent to the sink, $m$ pebbles on the sink, and zero or one pebbles on all other vertices;
\item $P_n$ with four or five pebbles on some vertex, zero pebbles on the following vertex, one or zero pebbles on all other non-sink vertices, and $m$ pebbles on the sink vertex.
\end{enumerate}
\end{definition}

\begin{lemma}
Let $P_n$ be an oriented path with pebbling assignment as in definition 7.2.1 such that $n=\lfloor k/2\rfloor+1$. Then $P_n\cong [S_{P_n}]$. 
\end{lemma}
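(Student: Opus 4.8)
The plan is to show that, for an assignment of the type in Definition 7.2.1, the only dynamics available are repeated pebbling moves along the final edge of the path, so that $[S_{P_n}]$ is itself a directed path on $\lfloor k/2\rfloor + 1$ vertices; since the hypothesis gives $n = \lfloor k/2\rfloor + 1$, this is a directed path on $n$ vertices and hence isomorphic to $P_n$.

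First I would fix notation: write the vertices of $P_n$ as $v_1, v_2, \dots, v_n$, with $v_1$ the source, $v_n$ the sink, and each edge oriented $v_j \to v_{j+1}$. By the hypothesis, $v_{n-1}$ carries $k$ pebbles, $v_n$ carries $m$ pebbles, and each of $v_1, \dots, v_{n-2}$ carries at most one pebble. Note $v_{n-1}$ has valence $1$ while $v_n$ has valence $0$, so pebbles piled on $v_n$ never create a move.

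Next I would establish the key dynamical claim: in \emph{every} state reachable from $(S_{P_n})$, the only vertex that is movable (at least two pebbles and positive valence) is $v_{n-1}$, and every pebbling move transfers pebbles from $v_{n-1}$ to $v_n$. Initially that is the only legal move, and performing it does not increase the pebble count on any of $v_1,\dots,v_{n-2}$ (nothing is ever moved onto them, since $v_{n-2}$ never accumulates two pebbles) nor on $v_{n-1}$ itself (for the same reason $v_{n-2}$ never feeds it). A short induction on the number of moves then shows the state $S_j$ after $j$ moves has $k-2j$ pebbles on $v_{n-1}$, $m+j$ on $v_n$, and the original distribution elsewhere; so exactly one move is available from $S_j$ when $k-2j\ge 2$ and none when $k-2j\le 1$. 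Consequently the reachable states are exactly $S_0, S_1, \dots, S_{\lfloor k/2\rfloor}$ with a single directed edge $S_j \to S_{j+1}$ for each $j$, i.e.\ $[S_{P_n}]$ is a directed path on $\lfloor k/2\rfloor + 1$ vertices rooted at $S_0$. Since $n = \lfloor k/2\rfloor + 1$, the map $v_j \mapsto S_{j-1}$ is a bijection carrying the source $v_1$ to the root $S_0$ and each edge $v_j\to v_{j+1}$ to $S_{j-1}\to S_j$, hence an isomorphism of directed graphs.

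I do not expect a genuine obstacle; the only step needing care is the dynamical claim — in particular ruling out that pebbles ever accumulate on $v_{n-1}$ or on any $v_i$ with $i<n-1$, which is exactly what forces $[S_{P_n}]$ to be a bare path rather than something with higher valence. (The freedom to place zero or one pebble on the remaining vertices is irrelevant, as those pebbles are never moved and never moved onto.) Once that is nailed down, the isomorphism follows just by counting vertices.
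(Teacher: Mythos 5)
Your proposal is correct and follows essentially the same route as the paper: observe that $v_{n-1}$ is the only vertex that is ever movable, so the pebbling process is a single forced sequence of $\lfloor k/2\rfloor$ moves into the sink, making $[S_{P_n}]$ a directed path on $\lfloor k/2\rfloor+1=n$ vertices. Your version just spells out the induction on states that the paper leaves implicit.
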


\begin{proof}
At every stage of the pebbling process, we have at most one move because of the orientation of the graph and the only movable vertex leads directly into the sink which can never be movable. Hence, we have one sequence of $\lfloor k/2\rfloor+1$ pebbling moves (where the $+1$ comes from included the start state). So $[S_{P_n}]$ is an oriented path with $\lfloor k/2\rfloor +1$ vertices. Hence, $P_n\cong [S_{P_n}]$.  
\end{proof}

\begin{lemma}
Let $P_n$ be an oriented path with pebbling assignment as in example 7.2.2 such that exactly $n-2$ of the edges are traversable. Then $P_n\cong [S_{P_n}]$. 
\end{lemma}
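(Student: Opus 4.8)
The plan is to show that for any assignment of the type in question --- along the path $v_1\to v_2\to\cdots\to v_n$ one vertex $v_j$ carries four or five pebbles, the following vertex $v_{j+1}$ carries no pebbles, every other non-sink vertex carries at most one pebble, and the sink $v_n$ carries some number $m$ --- the assignment graph $[S_{P_n}]$ is automatically a directed path, and then to use the hypothesis that exactly $n-2$ edges are traversable only in order to count how many vertices that path has.

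First I would follow the pebbling process and argue that at every reachable state at most one vertex carries two or more pebbles, so that at most one pebbling move is ever available. Initially only $v_j$ is loaded, and its only moves are $v_j\to v_{j+1}$: after the first such move $v_j$ carries two or three pebbles and $v_{j+1}$ carries one, and after the second $v_j$ carries zero or one and $v_{j+1}$ carries exactly two. From this point a single ``wave'' of two pebbles moves downstream --- whichever vertex holds the two pebbles pushes once, dropping to zero and depositing one pebble on the next vertex, which had at most one pebble and so now has at most two. The wave halts the first time it would land on a vertex that began with no pebbles (that vertex then holds exactly one) or when it reaches the sink. Throughout, the only candidate for a loaded vertex is $v_j$ during the first two moves or else the current wave vertex, so no branching ever occurs; hence $[S_{P_n}]$ is exactly the directed path traced out by this unique maximal sequence of states, and it has one more vertex than the total number $\ell$ of pebbling moves performed.

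It then remains to show that $\ell=n-1$. Since $v_j$ never receives a pebble, it pushes across the edge $(v_j,v_{j+1})$ exactly $\lfloor 4/2\rfloor=\lfloor 5/2\rfloor=2$ times, so that edge is traversed exactly twice; every other traversed edge emanates from a vertex whose pebble count never exceeds two, and so is traversed exactly once. Hence $\ell$ equals the number of distinct traversable edges plus one, which by hypothesis is $(n-2)+1=n-1$. Therefore $[S_{P_n}]$ is a directed path on $n$ vertices, and the map sending the $i$-th state of the process to $v_{i+1}$ is an isomorphism of directed graphs, giving $P_n\cong[S_{P_n}]$.

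The part needing the most care is the invariant ``at most one loaded vertex at every state'' together with the bookkeeping claim that $(v_j,v_{j+1})$ is the unique doubly-traversed edge; in particular I want to check that the small degenerate configurations behave correctly --- $v_{j+1}$ being the sink itself (which forces $n=3$), the wave reaching the sink on its first step (which forces $n=4$), and $v_j$ starting with five pebbles so that a single pebble is stranded on it forever --- since none of these disturbs the path structure or the identity $\ell=(\text{number of distinct traversable edges})+1$. Everything else is routine, and it is worth observing that the hypothesis ``exactly $n-2$ edges are traversable'' is precisely what forces the wave to travel the correct distance, and hence $\ell$ to equal $n-1$.
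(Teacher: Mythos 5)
Your proposal is correct and follows essentially the same route as the paper's proof: observe that the pebbling sequence is forced because at most one vertex is ever movable, that the edge leaving the heavily loaded vertex is traversed twice while every other traversable edge is traversed once, and conclude that $[S_{P_n}]$ is a directed path on $(n-2)+1+1=n$ vertices. The only difference is that you spell out (via the ``wave'' invariant) the single-movable-vertex claim and the degenerate cases, which the paper simply asserts.
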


\begin{proof}
We know that the edge originating from the vertex with four or five pebbles is traversed twice. Since $n-2$ of the edges are traversed in the unique pebbling sequence (since there is always at most one movable vertex) we have a total of $n-1$ pebbling moves and hence $[S_{P_n}]$ is an oriented path on $n$ vertices. Thus, $P_n\cong [S_{P_n}]$. 
\end{proof}

Now consider oriented paths $P_{n_1},...,P_{n_r}$ such that they all have an almost simple pebbling assignment on them where $P_{n_i}\cong [S_{P_{n_i}}]$. Then using the same construction as before we have the graph $G=\square_{i=1}^r P_{n_i}$ and we can give it an almost simple pebbling assignment following the same procedure. Hence we can identity each $P_{n_i}$ and its pebbling assignment within $G$. (Again recall it does not matter how we pebble the sink vertex that all the paths converge to.) Thus we have the following corollary. 

\begin{corollary}
Let $G=\square_{i=1}^r P_{n_i}$ with an almost simple pebbling assignment. Then $G\cong [S_G]$. 
\end{corollary}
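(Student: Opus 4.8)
The plan is to run the argument used above for simple pebblings of box products, with the single move $b_i$ along each factor replaced by the entire (linear) pebbling chain of that factor. First I would record the shape of each factor: since $P_{n_i}\cong[S_{P_{n_i}}]$ and $P_{n_i}$ is a directed path on $n_i$ vertices, $[S_{P_{n_i}}]$ is a directed path on $n_i$ vertices, so the pebbling process on $P_{n_i}$ is a unique chain of $n_i-1$ moves; since the root of an assignment graph is its only valence‑into‑nothing vertex, the isomorphism is forced to send the $j$‑th state of this chain (indexed from $j=0$) to the vertex $a_{i,j+1}$ of $P_{n_i}$. (The two preceding lemmas, together with the simple case, exhibit when this hypothesis holds; here it is assumed as part of the construction of the almost simple pebbling on $G$.)

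Next I would establish the non‑interference statement that makes $[S_G]$ split as a product. In the construction the $r$ distinguished copies of the paths inside $G$ carry all of the ``heavy'' pebbles of the $(S_{P_{n_i}})$'s, and every other vertex of $G$ carries at most one pebble. A pebbling move along distinguished copy $i$ has both its source and its target on that same copy, and the only vertex lying on two distinguished copies is the global sink $(a_{1,n_1},\dots,a_{r,n_r})$, which has valence zero and is therefore never movable. Moreover, a non‑sink vertex of distinguished copy $i$ has, in $G$, exactly the out‑edges it has in the stand‑alone path $P_{n_i}$, because all of its other coordinates are pinned at sinks. Consequently (a) no off‑copy vertex ever becomes movable, since it starts with at most one pebble and never receives a pebble; (b) the sub‑configuration carried by distinguished copy $i$ evolves exactly as in the stand‑alone $P_{n_i}$, the extra pebbles other copies dump on the shared sink being irrelevant; and (c) these $r$ sub‑processes run independently. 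Hence a state of $[S_G]$ is determined by, and determines, a tuple $(k_1,\dots,k_r)$ with $0\le k_i\le n_i-1$ recording how many moves have been made along each copy, and from such a state the available moves are exactly advancing some $k_i<n_i-1$ to $k_i+1$.

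Then I would define $f\colon V([S_G])\to V(G)$ by sending the state with tuple $(k_1,\dots,k_r)$ to $(a_{1,k_1+1},\dots,a_{r,k_r+1})$; by the previous paragraph both sides are indexed by $\prod_i\{0,\dots,n_i-1\}$ in the obvious way, so $f$ is a bijection sending root to root. For edges: an edge of $[S_G]$ is a single move, which by non‑interference advances one coordinate $k_i$ by one; applying $f$ fixes all coordinates except the $i$‑th, which advances from $a_{i,k_i+1}$ to $a_{i,k_i+2}$ along $P_{n_i}$ — a positively oriented edge of $G$. Conversely every positively oriented edge of $G$ advances one coordinate by one step in one factor, and (since the corresponding chain is not yet finished) is realized by the corresponding move of $[S_G]$. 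Thus $f$ preserves edges and their orientation and is an isomorphism of directed graphs, so $G\cong[S_G]$.

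The main obstacle is the non‑interference bundle of the second paragraph; everything else is bookkeeping. In the simple‑pebbling theorem this was dispatched in a line (``pebbling moves on different paths are independent''), but in the almost simple case the heavy vertices (four or five pebbles, or $k$ pebbles adjacent to the sink) generate longer cascades, so one must check that each such cascade stays confined to its own distinguished copy and that the unique overlap vertex, being a sink of valence zero, introduces no coupling. Once that is pinned down, the remainder is literally the simple‑pebbling argument.
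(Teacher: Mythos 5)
Your proposal is correct and follows the same route the paper intends: reuse the product argument of the simple‑pebbling theorem, with the single move $b_i$ on each factor replaced by the unique chain of moves on that factor guaranteed by $P_{n_i}\cong[S_{P_{n_i}}]$ from the two preceding lemmas. In fact the paper offers essentially no written proof of this corollary beyond a sentence asserting the construction carries over, so your explicit non‑interference argument (moves on a distinguished copy stay on that copy, the only shared vertex is the valence‑zero global sink, off‑copy vertices never become movable) supplies exactly the detail the paper leaves implicit.
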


\begin{theorem}
Let $K_{n,m}:=\overline{K_n}\vee\overline{K_m}$. We orient $K_{n,m}$ by having the edges leaving $\overline{K_n}$ and towards $\overline{K_m}$. Then there exists a graph $G$ and pebbling assignment $(S_G)$ such that $G$ contains $K_{n,m}$ and $G\cong [S_G]$.  
\end{theorem}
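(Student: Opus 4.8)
I would first translate the conclusion into a structural picture of where a copy of $K_{n,m}$ (with this all-down orientation; call it $K$) can sit inside an assignment graph, then build $G$ around that picture. Note first that for $\min(n,m)\ge 2$ the graph $K$ contains a $4$-cycle, so if $G\supseteq K$ then $G$ contains a cycle; hence by Theorem 4.1 the assignment $(S_G)$ cannot be fully traversable, and by Corollary 1.2 some edge of $G$ is traversed more than once, which forces some vertex to receive at least four pebbles. So from the outset we are in the ``not fully traversable'' regime of Section 7, and the previous self-assignment families (downward trees of Theorem 5.1, Cartesian products of paths of Theorem 6.1 / Corollary 7.2.1) cannot themselves serve, since one checks directly that neither a tree nor a Cartesian product of paths contains $K$ when $\min(n,m)\ge 2$: in a tree no two vertices share an out-neighbour, and in a product the covering relations never produce four edges joining two vertices to two common vertices. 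Using the analysis behind Proposition 1.1 and the ``diamond'' argument of Theorem 2.1, one sees that whenever $n$ states $u_1,\dots,u_n$ at one level of an assignment graph all have a pebbling move onto each of $m$ common states $w_1,\dots,w_m$, the $u_i$ must have the special form ``two pebbles on a vertex $a_i$, a fixed remainder $R$ elsewhere, and no other movable pebble,'' with $a_1,\dots,a_n$ sharing one out-neighbourhood $\{b_1,\dots,b_m\}$ and each $b_j$ of valence zero, so that the move $a_i\to b_j$ yields the state $R+e_{b_j}$ independent of $i$. Thus $G$ must contain $a_1,\dots,a_n$ with common out-neighbourhood $\{b_1,\dots,b_m\}$ (these are the edges of $K$), and a pebbling in which every state ``two pebbles on $a_i$, remainder $R$'' is reachable from the root.

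For $n=1$ this is already the statement: $K_{1,m}$ is the downward star, and two pebbles on the centre gives $K_{1,m}\cong[S_{K_{1,m}}]$ (the depth-one case of Theorem 5.1), with $K_{1,m}$ trivially containing itself. For $n\ge 2$ the plan is to attach, above $a_1,\dots,a_n$, a \emph{funnelling gadget} that can deliver exactly two pebbles to any single $a_i$ while leaving nothing else movable. The simplest candidate is a vertex $c$ with $c\to a_i$ for every $i$, together with an assignment placing four pebbles on $c$, the remainder $R$ (if any) on sinks, and zero or one pebble elsewhere: firing $c\to a_i$ twice reaches the state ``$(a_i\!:\!2)+R$'' for each $i$, these $n$ states are distinct, and each then fires identically onto the $m$ coinciding states $R+e_{b_j}$ — giving exactly the pattern $K$ inside $[S_G]$. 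The catch is that the branching of the funnel also produces the dead states ``$(a_i\!:\!1,a_j\!:\!1)+R$'' and the intermediate states ``$(c\!:\!2,a_i\!:\!1)+R$''; so $G$ has to be \emph{defined} to contain precisely these as extra vertices. Concretely I would take $G$ to be (a graph isomorphic to) the full state graph of this gadget-plus-$K$ configuration and then take $(S_G)$ to be the assignment on $G$ whose pebbling process re-traces that same state graph; the copy of $K$ is visible as the layer of ``two-pebbles-on-a-single-$a_i$'' states over the layer of ``one-pebble-on-a-single-$b_j$'' states.

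The verification that $G\cong[S_G]$ then follows the template of Theorems 5.1 and 6.1: every state reachable from the root is uniquely labelled by the (order-irrelevant) multiset of moves that produced it, which gives a well-defined bijection $V(G)\to V([S_G])$, and one checks it preserves directed edges by noting that a single pebbling move changes the label in exactly one coordinate. The hard part — and where I would spend the real effort — is making the state graph close up to \emph{exactly} $G$, neither larger nor smaller. ``No proliferation'' is the genuine danger (in general $[S_H]$ is strictly bigger than $H$): this is why the funnel is chosen with four pebbles and valence $n$, so that firing all the way into one $a_i$ exhausts $c$ and leaves the junk-free state $(a_i\!:\!2)$, and why the remainder $R$ must be supported on sinks so it is never disturbed; one must verify that the only states ever reached are the root, the funnel states, the dead half-way states, the $n$ source-states and the $m$ sink-states. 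Simultaneously one checks no intended identification fails — the $n$ source-states stay pairwise distinct and the $m$ sink-states genuinely coincide across all $i$ (this is exactly the junk-freeness condition extracted in the first paragraph). I would also double-check whether the funnel states and dead states, once adjoined to $G$, themselves sit in a configuration compatible with the chosen $(S_G)$, and if not, thicken the gadget (e.g., route through a short path carrying enough pebbles, or iterate the construction) until the state graph is stable; I expect the bulk of the written proof to be this explicit description of $G$ and $(S_G)$ together with the bookkeeping that the reachable states are exactly the vertices of $G$.
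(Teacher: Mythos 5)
Your funnel gadget does produce an \emph{assignment graph} containing a copy of the oriented $K_{n,m}$ (the layer of states ``two pebbles on $a_i$'' sitting over the $m$ coinciding sink states), but the theorem asks for a graph $G$ that itself contains $K_{n,m}$ and satisfies $G\cong[S_G]$, and your proposal stalls exactly at that step. ``Take $G$ to be the full state graph of this configuration and take $(S_G)$ to be the assignment whose pebbling process re-traces that same state graph'' is not a construction: no such assignment is exhibited, and your fallback of thickening the gadget ``until the state graph is stable'' is a fixed-point iteration with no argument that it terminates or converges. Since producing the pair $(G,(S_G))$ is the entire content of the theorem, this is a genuine gap, and you correctly identify it as the hard part without resolving it. (Your opening structural analysis is also overreaching --- nothing about a mere $K_{n,m}$ subgraph of $[S_G]$ forces the target states to have valence zero or the source states to carry no other movable pebble --- but that paragraph is only motivation.)

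The paper closes the loop with a self-referential trick rather than an external gadget. Pebble $K_{n,m}$ itself with two or three pebbles on each vertex $a_i$ of $\overline{K_n}$ and define $G:=[S_{K_{n,m}}]$. Inside $G$, for each $i$ consider the state $f(a_i)$ in which every source except $a_i$ has already fired into $b_1$; these are $n$ distinct vertices of $G$, each of out-valence exactly $m$, and their common out-neighbourhood is precisely the $m$ terminal states (all sources depleted, $b_1$ holding $n-1$ extra pebbles and $b_j$ one). So $G$ contains an induced copy of $K_{n,m}$ at its penultimate level. Now pebble $G$ by copying the original assignment onto that copy --- two or three pebbles on each $f(a_i)$, zero elsewhere. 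Each $f(a_i)$ fires at most once and only into the $m$ common sinks, so $[S_G]$ is again the assignment graph of ``$n$ independent sources over $m$ shared sinks,'' i.e.\ $[S_G]=[S_{K_{n,m}}]=G$. The proliferation you rightly fear never arises because $G$ is \emph{defined} to be the assignment graph of the very configuration being re-pebbled; that one observation is what your proposal is missing.
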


\begin{proof}
Let us pebble $K_{n,m}$ with two or three pebbles on the $n$ vertices of $\overline{K_n}$ and any number of pebbles on the $m$ vertices of $\overline{K_m}$. Let $G=[S_{K_{n,m}}]$. Currently, $G$ is naturally oriented and not pebbled. Every pebbling move on $K_{n,m}$ is uniquely described by which edge is traversed (i.e. by the pair $(u,v)$ for $u\in \overline{K_n}$ and $v\in \overline{K_m}$). Hence if we label $\overline{K_n}$ as $\brac{a_1,...,a_n}$ and $\overline{K_m}$ as $\brac{b_1,...,b_m}$, then the edges of $K_{n,m}$ are labeled $\brac{(a_i,b_j)}$. So we can label the vertices of $G$ as sequences of these $(a_i,b_j)$ which will be unordered since the order of pebbling moves does not matter. Note that the sink nodes of $G$ will be labeled as a sequence in which the moves $(a_i,b_j)$ occur for every $a_i$ paired with some $b_j$ (i.e. exhausting all pebbled nodes of $K_{n,m}$). Consider the map $f:V(K_{n,m})\to V(G)$ where $f: a_i\mapsto (a_1,b_1)(a_2,b_1)\cdots (a_{i-1},b_1)(a_{i+1},b_1)\cdots (a_n,b_1)$
\end{proof}

\pagebreak

\section{Undirected Isomorphism}

In this section we consider oriented graphs $G$ with pebbling assignment $(S_G)$ such that $G\cong [S_G]$ as undirected graphs. 

\begin{theorem}
Let $G$ be an oriented graph with pebbling assignment $(S_G)$. If $G$ is an undirected induced subgraph of $[S_G]$ then there exists an oriented graph $H$ with pebbling assignment $(S_H)$ such that $G$ is an oriented subgraph of $H$ and $(S_G)$ is a restricted assignment of $(S_H)$ and $H\cong [S_H]$. 
\end{theorem}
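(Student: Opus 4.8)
The plan is to build $H$ by a saturation procedure: repeatedly replace the current pebbled graph by its assignment graph, re-pebbled so as to mimic $(S_G)$ on the embedded copy of $G$ and to carry no pebbles on the freshly created vertices, and show that this stabilizes at a graph equal to its own assignment graph.

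First I would upgrade the hypothesis to an orientation-preserving induced embedding $\iota_0 : G \hookrightarrow [S_G]$: since $[S_G]$ is downward directed and acyclic, the copy of $G$ inside it carries an orientation, and I would argue it is the orientation of $G$ (or, if the statement is meant only for underlying graphs, weaken the conclusion accordingly). Then set $X_0 := G$, $(S_0) := (S_G)$, and inductively $X_{k+1} := [S_{X_k}]$ with assignment $(S_{k+1})$ putting $S_G(v)$ pebbles on the vertex corresponding to $v\in V(G)$ (under an inductively maintained embedding $G\hookrightarrow X_{k+1}$) and $0$ pebbles elsewhere. The invariant to carry is: \emph{there is an orientation-preserving induced embedding $G\hookrightarrow X_k$, compatible across stages, and $X_k$ is itself an orientation-preserving induced subgraph of $X_{k+1}=[S_{X_k}]$, compatibly with that embedding.} The base case of the second half is precisely the hypothesis.

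Granting the invariant, termination follows from a finiteness observation. Put $T:=\sum_{v\in V(G)}S_G(v)$; this is the total pebble count at every stage, so in every $(X_k,S_k)$ a sequence of pebbling moves has length at most $T$ and at each point there are at most $T/2$ available moves, whence the number of reachable states is bounded by a quantity depending only on $T$, uniformly in $k$. The invariant yields an increasing chain of induced subgraphs $X_0\subseteq X_1\subseteq X_2\subseteq\cdots$; let $X_\infty$ be its union with the evident assignment $S_\infty$. Every reachable state of $X_\infty$ comes from finitely many moves and so already occurs in some $X_k$; checking this for edges as well gives $[S_{X_\infty}]=X_\infty$. But $|V([S_{X_\infty}])|$ is bounded by the uniform $T$-bound, so $X_\infty$ is finite, so the chain stabilizes: $X_k=[S_{X_k}]$ for some $k$. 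Set $H:=X_k$ with its orientation and $(S_H):=(S_k)$; then $G\hookrightarrow H$ is an orientation-preserving subgraph, $(S_H)$ restricts to $(S_G)$ on $G$ by construction, and $H\cong[S_H]$.

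The main obstacle is the inductive step of the structural invariant: if $X$ is pebbled by ``$S_G$ on the copy of $G$, $0$ elsewhere'' and is an orientation-preserving induced subgraph of $[S_X]$ (compatibly with $G\hookrightarrow X$), show the same for $[S_X]$ pebbled the same way. The natural map sends a vertex $w$ of $[S_X]$ --- a reachable state $\sigma_w$ of $X$ --- to the reachable state of $[S_X]$ obtained by following the canonical pebbling path that realizes $\sigma_w$; one must verify it is well defined (the mimicking assignment makes the needed moves available in exactly one way), injective, orientation-preserving, and induced --- the last point being where one uses that the $0$-pebbled new vertices cannot accumulate two pebbles except along structure already present, so no spurious edges or identifications arise --- and that it restricts correctly on the copy of $G$. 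I expect this bookkeeping, rather than the termination step, to be the real work, and it is the unique place where the hypothesis that $G$ embeds in $[S_G]$ is used.
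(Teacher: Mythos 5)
Your saturation scheme is far more elaborate than what is needed, and its two load-bearing claims both have problems. First, the termination argument: the number of reachable states of a pebbling assignment is \emph{not} bounded by a function of the total pebble count $T$ alone --- two pebbles on the centre of an out-star with $n$ leaves already give $n+1$ states. In your iteration, each pebbled vertex of the copy of $G$ sitting inside $X_{k+1}=[S_{X_k}]$ acquires as out-neighbours \emph{all} states reachable in one move from the corresponding state of $X_k$, so the out-valence of the movable vertices can grow with $k$, new first moves appear, and $|V(X_{k+1})|$ can keep increasing; nothing you have said forces the chain to stabilize. Second, the inductive invariant (that $G$, and indeed $X_k$, embeds in $X_{k+1}$ as an orientation-preserving induced subgraph compatibly with the re-pebbling) is exactly the hard content and you defer it; it fails without further intervention, precisely because of the new out-edges just described, and also because the hypothesis only gives an \emph{undirected} induced embedding of $G$ into $[S_G]$, so the Hasse orientation of $[S_G]$ need not restrict to the orientation of $G$.

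The idea you are missing is that a single step suffices once you take control of the orientation of the new edges. Set $H$ equal to the underlying undirected graph of $[S_G]$. Because the copy of $G$ is \emph{induced}, every edge of $H$ outside $G$ meets the copy of $G$ in at most one vertex; give the copy of $G$ the orientation and pebbles of $G$ itself, orient every other edge incident to the copy of $G$ \emph{towards} it, orient the remaining edges arbitrarily, and put zero pebbles outside the copy of $G$. Then no vertex outside the copy of $G$ is ever movable and no out-edge leaves the copy of $G$, so the reachable states of $(H,S_H)$ are exactly those of $(G,S_G)$; that is, $[S_H]=[S_G]=H$ as undirected graphs, and all three required conclusions hold by construction. (Note that in this section the isomorphism $H\cong[S_H]$ is an isomorphism of undirected graphs, which is why freely re-orienting the edges of $[S_G]$ is permissible.)
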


\begin{proof}
Let $H=[S_G]$ as an undirected graph. Then $G$ is a subgraph of $H$. Let us copy the orientation and pebble assignment of $G$ onto the copy of $G$ within $H$. Since $G$ is an induced subgraph of $H$, every edge of $H$ that is not in $G$ is incident to at most one vertex of $G$. Hence, for any of these edges we orient them towards $G$. The rest of the edges may be oriented in whatever fashion. Let us pebble the rest of $H$ with zero pebbles. Then $H$ is not fully-traversable and in particular, $[S_H]=[S_G]$. Hence, $H\cong[S_H]$. 
\end{proof}

Note that we can have a similar theorem for when the isomorphic preserved orientation but we would have to require that not only is $G$ downward oriented but also when adding all edges incident to $G$ we may orient them downward. 

\section*{Acknowledgments}

This material is based upon work that was supported by the National Science Foundation under Grant DMS-1852378. Any opinions, findings, and conclusions or recommendations expressed in this material are those of the authors and do not necessarily reflect the views of the National Science Foundation.

We would like to thank our research advisors, Eugene Fiorini, Nathan Shank, and Andrew Woldar, for their guidance on this project and for the ideas that inspired all of this work. Finally, we want to thank Moravian College for hosting the REU this summer.

\pagebreak 

\section*{References}

\begin{enumerate}[\text{[}1\text{]}]

\item M. S. Anilkumar and S. Sreedevi,  ``A comprehensive review on graph pebbling and rubbling," \textit{Jour. of Physics: Conference Series}, \textbf{1531} (2019).

\item D. H. Bailey and J. M. Borwein, ``Experimental Mathematics: Examples, Methods, and Implications, \textit{Notices of the AMS}, \textbf{52} (2005), 502-514.

\item E. R. Berlekamp, J. H. Conway, and R. K. Guy, ``Winning Ways for Your Mathematical Plays, Vol. 1," \textit{CRC Press}, (2001).

\item J. Blocki and S. Zhou, ``On the computational complexity of minimal cumulative cost graph pebbling," \textit{Financial Cryptography and Data Security, Lecture Notes in Computer Science, Springer, Berlin, Heidelberg}, \textbf{10957} (2018).

\item A. Czygrinow and G. Hurlbert, ``Girth, pebbling, and grid thresholds," \textit{SIAM J. Discrete Math}. \textbf{20} (2006), 1-10.

\item E. Fiorini, G. Johnston, M. Lind, A. Woldar, and W. H. T. Wong, ``Cycles and girth in pebble assignment graphs," \textit{Graphs and Combinatorics}, submitted.

\item E. Fiorini, M. Lind, A. Woldar, and W. H. T. Wong, ``Characterizing winning positions in the impartial two-player pebbling game on complete graphs," \textit{J. of Int. Seq}. \textbf{24(6)}, (2021).

\item E. Fiorini, V. de Silva, and C. J. Verbeck, Jr., ``Symmetric class-0 subgraphs of complete graphs, \textit{DIMACS Technical Reports}, (2011).

\item G. Isaak and M. Prudente, ``Two-player pebbling on diameter 2 graphs," \textit{Int. J. Game Theory}, (2021).

\item M. Lind, E. Fiorini, and A. Woldar, ``On properties of pebble assignment graphs," \textit{Graphs and Combinatorics}, submitted.

\item G. Hurlbert, ``General graph pebbling," \textit{Discrete Applied Mathematics} \textbf{161} (2013), 1221-1231.

\item G. Hurlbert, ``Recent progress in graph pebbling," \textit{arXiv:math/0509339v1 [math.CO]} (2008).

\item G. Hurlbert, ``A survey of graph pebbling," \textit{arXiv:math/0406024v1 [math.CO]} (2004).

\end{enumerate}

\end{document}